\documentclass[12pt]{amsart}
\usepackage{amsmath, amssymb, amsthm}
\usepackage[margin=1in]{geometry}
\usepackage{tikz}
\usepackage{url}
\usepackage[matrix,arrow,curve,frame]{xy}
\usetikzlibrary{decorations.pathreplacing}
\usepackage[colorlinks=true]{hyperref}

\newtheorem{theorem}{Theorem}[section]

\newtheorem{claim}[theorem]{Claim}
\newtheorem{definition}[theorem]{Definition}
\newtheorem{example}[theorem]{Example}
\newtheorem{observation}[theorem]{Observation}

\newtheorem{remark}[theorem]{Remark}
\newtheorem{question}[theorem]{Question}

\newcommand{\SU}{\mathrm{SU}}
\newcommand{\BSU}{\mathrm{BSU}}
\newcommand{\BBSU}{\mathrm{BBSU}}
\newcommand{\BH}{\mathrm{BH}}
\newcommand{\BK}{\mathrm{BK}}
\newcommand{\BE}{\mathrm{BE}}

\newcommand{\BT}{\mathrm{BT}}
\newcommand{\BW}{\mathrm{BW}}
\newcommand{\W}{\mathrm{W}}
\newcommand{\Wt}{\mathrm{Wt}}
\newcommand{\K}{\mathrm{K}}
\newcommand{\HH}{\mathrm{H}}
\newcommand{\T}{\mathrm{T}}
\newcommand{\E}{\mathrm{E}}

\newcommand{\Fr}{\mathrm{Fr}}
\newcommand{\G}{\mathrm{G}}
\newcommand{\A}{\mathrm{A}}
\newcommand{\B}{\mathrm{B}}
\newcommand{\C}{\mathrm{C}}
\newcommand{\D}{\mathrm{D}}
\newcommand{\R}{\mathrm{R}}
\newcommand{\M}{\mathrm{M}}
\newcommand{\N}{\mathrm{N}}
\newcommand{\BN}{\mathrm{BN}}
\newcommand{\BM}{\mathrm{BM}}
\newcommand{\BG}{\mathrm{BG}}
\newcommand{\lra}{\longrightarrow}
\newcommand{\lla}{\longleftarrow}
\newcommand{\llra}[1]{\stackrel{#1}{\lra}}
\newcommand{\llla}[1]{\stackrel{#1}{\lla}}

\title{Stability phenomena for Kac-Moody Groups}
\author{Nitu Kitchloo}
\address{Department of Mathematics, Johns Hopkins University, Baltimore, USA}
\email{nitu@math.jhu.edu}
\date{\today}

\begin{document}

\begin{abstract}
In this article we show that a canonical procedure of extending generalized Dynkin diagrams gives rise to families of Kac-Moody groups that satisfy homological stability. Furthermore, we identify this stable cohomology ring with the ring of stable Weyl-invariants up to a nilpotent extension and away from a finite set of primes. We also briefly sketch some emergent structure that appears on stabilization. Our results are illustrated for the family $\E_n$ which is of interest in String theory. The techniques used involve homotopy decompositions of classifying spaces of Kac-Moody groups. 
\end{abstract}

\maketitle

\tableofcontents

\section{Introduction}
\noindent
The theory of Kac-Moody Lie algebras and the underlying groups is well established at this point \cite{K, KP,Ku}. The complex points of Kac-Moody groups form a natural extension of the class of semi-simple Lie groups, even though they need not be finite dimensional. Concepts like maximal torus, Weyl groups and root systems extend almost by definition to these groups. As topological groups, one may study Kac-Moody groups through homotopical invariants like the classifying space and its cohomology \cite{Ki,BrK}. 

\medskip
\noindent
Recall that the compact Lie groups in the infinite families $\A_n, \B_n, \C_n$ and $\D_n$ are known to homologically stabilize to very interesting groups that admit emergent homotopical structure like Bott periodicity. It is therefore natural to ask about the structure of such families among Kac-Moody groups. In this article we prove homological stability for canonically defined families of Kac-Moody groups and identify the stable cohomology ring with the stable Weyl invariants, up to nilpotent extensions. We also briefly discuss the emergent structure that appears on stabilization. We illustrate our results in the example of the family $\E_n$ that begins with exceptional Lie groups, and is extended further along Kac-Moody groups. Groups in the $\E_n$ family have been suggested as symmetries of various compactifications of 11-dimensional supergravity \cite{BGH,DN,DHN,J,nCL,W}.

\medskip
\noindent
The results described in this article were motivated by a question asked by Ian Agol in private communication. We thank him for raising this interesting question. 

\section{Generalized Cartan matrices and Kac-Moody groups}\label{sec:gcm}

\noindent
Given a finite index set $I$ with $|I| = n$, a \textbf{generalized Cartan matrix} $\A_{I}$ (GCM) is an integral matrix $\A_{I} = (a_{ij})_{i,j \in I}$ satisfying the following properties (\cite{K}):
\begin{itemize}
\item[(i)] $a_{ii} = 2$
\item[(ii)] $a_{ij} \leq 0$ if $i \neq j$
\item[(iii)] $a_{ij} = 0 \Leftrightarrow a_{ji} = 0$
\end{itemize}
If $J \subseteq I$ is a subset, then by $\A_J \subseteq \A_I$ we will mean the sub generalized Cartan matrix obtained by restricting the indices $i,j$ to the subset $J$. 

\medskip
\begin{example}
In he first example, $|I|=2$ and we assume $a,b > 1$. In the second example, $|I| = n$ with only the nonzero entries shown below
\begin{align*}
\A(a,b) &= \begin{pmatrix} 2 & -a \\ -b & 2 \end{pmatrix} \\[1em]
\A_n &= \begin{pmatrix} 
2 & -1 &  & & & 0 \\
-1 & 2 & -1 & & & \\
 & -1 & 2 & -1 & & \\
& & \ddots & \ddots & \ddots &\\
& & & -1 & 2 & -1 \\
0 & & &  & -1 & 2
\end{pmatrix}
\end{align*}
\end{example}
\noindent
One represents a GCM by a \textbf{generalized Dynkin diagram}, which is a graph with $n$ nodes indexed by $I$, and an edge between distinct nodes $i$ and $j$ if $a_{ij} \neq 0$. We label that edge $(a_{ij}, a_{ji})$ or leave it unlabeled if $a_{ij}a_{ji} = 1$.

\medskip
\begin{example}
\begin{align*}
\A(a,b) &\quad\longleftrightarrow\quad 
\begin{tikzpicture}[baseline=0.15ex]
\draw (0,0) -- (0.8,0);
\node[circle,draw,inner sep=2pt,fill=white] (1) at (0,0) {};
\node[circle,draw,inner sep=2pt,fill=white] (2) at (0.8,0) {};
\node[above] at (0.4,0) {\scriptsize $(a, b)$}  ;
\end{tikzpicture} \\[1em]
\A_n &\quad\longleftrightarrow\quad 
\begin{tikzpicture}[baseline=-0.5ex]
\draw (0,0) -- (3.2,0);
\draw (3.35,0) -- (3.55,0);
\draw (4.4,0) -- (4.8,0);
\foreach \x in {0,1,2,3,4}
  \node[circle,draw,inner sep=2pt,fill=white] at (\x*0.8,0) {};
\node at (3.95,0) {$\cdots$};
\node[circle,draw,inner sep=2pt,fill=white] at (4.8,0) {};
\end{tikzpicture}
\end{align*}
\end{example}

\medskip
\noindent
Given a GCM $\A_{I}$, there is an associated simply-connected topological group $\G(\A_I)$ known as the minimal complex Kac-Moody group \cite{Ku}. The group contains a unitary form $\K(\A_{I})$ such that:

\begin{itemize}
\item[(i)] $\K(\A_{I})$ has rank $n$, i.e., there is a compact maximal torus of rank $n$, call it $\T$.

\item[(ii)] $\K(\A_I)$ is natural in $I$, i.e. an inclusion $J \subseteq I$ functorially induces $\K(\A_J) \subseteq \K(\A_I)$.
\end{itemize}

\medskip
\noindent
If $\A_I$ corresponds to the Dynkin diagram of a compact Lie group, then up to isomorphism, $\K(\A_I)$ recovers this group (these diagrams are called \textbf{finite type}).

\medskip
\begin{example}
$\K(\A_n) = \SU(n+1)$
\end{example}

\medskip
\noindent
The classifying space $\BK(\A_I)$ of the group $\K(\A_I)$ can be described in terms of the classifying spaces of all finite type diagrams in $\A_I$. We describe this result below. 

\medskip
\begin{definition} \label{def:1}
Let $\mathcal{S}(\A_I) = \{ J \subseteq I \mid \A_J \text{ is of finite type} \}$ be the poset known as the poset of spherical subsets for $\A_I$. Given $J \in \mathcal{S}(\A_I)$, we define the following subgroup
$$\HH_{J}(\A_I) = \langle \T, \K(\A_J) \rangle \subseteq \K(\A_I).$$
Hence $\HH_{J}(\A_I)$ is a compact subgroup of $\K(\A_I)$ of maximal rank, extending $\K(\A_J)$ by torus of rank $|I| - |J|$. We define $\W(\A_J)$ to be the Weyl group of $\K(\A_J)$. 
\end{definition}

\medskip
\noindent
The following homotopy decomposition describes the structure of the classifying space $\BK(\A_I)$ and that of $\BW(\A_I)$ in terms of finite type diagrams

\medskip
\begin{theorem}[\cite{Ki},\cite{Ki2},\cite{BrK}]\label{thm:hocolim}
The following canonical maps are homotopy equivalences
$$\underset{J \in \mathcal{S}(\A_I)}{\operatorname{hocolim}} \, \BH_J(\A_I) \longrightarrow \BK(\A_I), \quad \quad \underset{J \in \mathcal{S}(\A_I)}{\operatorname{hocolim}} \, \BW(\A_J) \longrightarrow \BW(\A_I).$$ 
\end{theorem}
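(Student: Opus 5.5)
The plan is to realize both decompositions geometrically, through the action of the group on a contractible complex built from spherical parabolic cosets (the topological Tits building, or Davis complex). For the Kac-Moody group I will consider the space
$$X = \underset{J \in \mathcal{S}(\A_I)}{\operatorname{hocolim}} \, \K(\A_I)/\HH_J(\A_I),$$
which is the geometric realization of the poset of cosets $k\HH_J(\A_I)$, $J\in\mathcal{S}(\A_I)$, ordered by inclusion. The group $\K(\A_I)$ acts on $X$ with isotropy groups conjugate to the compact subgroups $\HH_J(\A_I)$. Taking the Borel construction $E\K(\A_I)\times_{\K(\A_I)} X$ and commuting it past the homotopy colimit gives
$$E\K(\A_I)\times_{\K(\A_I)} X \simeq \underset{J}{\operatorname{hocolim}}\, E\K(\A_I)\times_{\K(\A_I)} \K(\A_I)/\HH_J \simeq \underset{J}{\operatorname{hocolim}}\, \BH_J(\A_I).$$
The theorem therefore reduces to showing that $X$ is contractible, since in that case the projection $E\K\times_\K X\to \BK(\A_I)$ is an equivalence. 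The same argument for the Weyl group uses the discrete complex $\Sigma=\underset{J}{\operatorname{hocolim}}\, \W(\A_I)/\W(\A_J)$, which is the Davis complex of the Coxeter system $(\W(\A_I),\{s_i\})$. Here the stabilizer of $\W(\A_J)$ inside $\W(\A_I)$ is the parabolic subgroup, and the Borel construction gives $\operatorname{hocolim}\, \BW(\A_J)$. Contractibility of $\Sigma$ is Davis' theorem; I would reprove it by ordering $\W(\A_I)$ by length and attaching the chambers $w\cdot K$ one at a time, where $K$ is the realization of $\mathcal{S}(\A_I)$. Each attachment happens along a contractible subcomplex, namely the cone on the spherical set of descents.

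The main obstacle is the contractibility of $X$ for the group $\K(\A_I)$. My approach is to use the Bruhat decomposition $\G(\A_I)=\coprod_{w}\B w\B$ of the Kac-Moody group. The unitary form acts transitively on the building of $\G$, so $X$ is homeomorphic to the realization of the spherical part of the topological Tits building $\G/\mathcal{P}_J$. I would then filter $X$ by Schubert cells indexed by $w\in\W(\A_I)$, using the Bruhat order. On associated graded pieces, the cell attached for $w$ should look like $\B w\B/\B\times(\text{cone on the link of descents})$, and since $\B w\B/\B$ is a complex affine space this factor is contractible. This mirrors the Davis argument. The delicate points are the topology: the minimal group carries a colimit (Kac-Peterson) topology, so I must check that the filtration consists of cofibrations and that the spaces are CW. Once that is done, a colimit of contractible stages is contractible.

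An alternative route for the first equivalence, which I would use if the geometric one becomes messy, is cohomological. By Kac-Peterson, the cohomology of $\BK$ is computed via the flag variety, and one can compare the Bousfield-Kan spectral sequence for the hocolim with the Leray spectral sequence, prime by prime. Since everything is simply connected, mod-$p$ and rational equivalences then assemble into a homotopy equivalence.
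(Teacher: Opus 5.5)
Your strategy is the one behind the references the paper cites; the paper itself gives no argument. $X=\operatorname{hocolim}_{J}\K(\A_I)/\HH_J(\A_I)$ is the topological Tits building of \cite{Ki}. The first equivalence follows from its contractibility via the Borel construction, exactly as you set it up, and that contractibility is established by the kind of Bruhat-cell filtration you outline, reducing to Davis-style combinatorics. The Weyl group statement is the Davis complex argument, which you have right.

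The crucial step, however, needs a corrected justification. The filtration steps are not trivial ``because $\B w\B/\B$ is an affine space''. A contractible open stratum says nothing by itself: attaching a $2\ell(w)$-cell along its boundary changes the homotopy type.

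Write points of $X$ as $[g,x]$ with $g\in\G$ and $x\in|\mathcal{S}|$. Here $[g,x]=[g',x]$ iff $g^{-1}g'\in\mathcal{P}_{J(x)}$, where $J(x)$ is the smallest vertex of the open simplex containing $x$. If $g=bwb'\in\B w\B$, then $g\mathcal{P}_J=bw\mathcal{P}_J$ contains $b\dot v$ for the shortest $v\in w\W(\A_J)$. Moreover, $w$ is shortest in its coset iff $J\cap\mathrm{In}(w)=\emptyset$, where $\mathrm{In}(w)$ is the right descent set, and in that case $\B w\B/\B\to\B w\mathcal{P}_J/\mathcal{P}_J$ is a bijection. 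For $T\subseteq I$, let $|\mathcal{S}|^{T}$ be the union of the mirrors $|\mathcal{S}_{\ge\{s\}}|$, $s\in T$. Then the open stratum of length $n$ is $\coprod_{\ell(w)=n}\B w\B/\B\times\bigl(|\mathcal{S}|\setminus|\mathcal{S}|^{\mathrm{In}(w)}\bigr)$, and
\[
X_{\le n}/X_{\le n-1}\simeq\bigvee_{\ell(w)=n}S^{2\ell(w)}\wedge\bigl(|\mathcal{S}|/|\mathcal{S}|^{\mathrm{In}(w)}\bigr).
\]

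So the cell contributes a sphere, and the triviality comes entirely from the chamber factor. Since $\mathrm{In}(w)$ is spherical, $|\mathcal{S}|^{\mathrm{In}(w)}$ is contractible, as is $|\mathcal{S}|$. Indeed, every intersection of its mirrors is $|\mathcal{S}_{\ge T'}|$, which has initial object $T'\subseteq\mathrm{In}(w)$, so the nerve lemma applies.

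Better still, argue at the space level. $X_{\le n}$ arises from $X_{\le n-1}$ by attaching copies of $D^{2\ell(w)}\times|\mathcal{S}|$ along $S^{2\ell(w)-1}\times|\mathcal{S}|\cup D^{2\ell(w)}\times|\mathcal{S}|^{\mathrm{In}(w)}$, and this inclusion is a homotopy equivalence. Hence each $X_{\le n-1}\hookrightarrow X_{\le n}$ is an equivalence and $X\simeq X_{\le 0}=|\mathcal{S}|\simeq\ast$, with no Whitehead-type argument needed. With this in place, your cohomological fallback is unnecessary. It would also be much harder to carry out, since it needs an independent handle on $\HH^\ast(\BK(\A_I);\mathbb{F}_p)$ and a comparison map of spectral sequences that you have not specified.
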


\bigskip
\section{Families}\label{sec:families}

\noindent
In order to illustrate the stability properties of families of Kac-Moody groups, let us consider the example of the generalized Cartan matrices $\E_n$ represented by the following sequence of generalized Dynkin diagrams:

\begin{center}
\begin{tikzpicture}[scale=0.7]
\node[anchor=east] at (-0.5,4.5) {$\E_6:$};
\draw (0,4.5) -- (4,4.5);
\draw (2,4.5) -- (2,5.3);
\foreach \x in {0,1,2,3,4}
  \node[circle,draw,inner sep=2pt,fill=white] at (\x*1,4.5) {};
\node[circle,draw,inner sep=2pt,fill=white] at (2,5.3) {};
\node[anchor=west] at (4.5,4.5) {Exceptional finite type};

\node[anchor=east] at (-0.5,3) {$\E_7:$};
\draw (0,3) -- (5,3);
\draw (2,3) -- (2,3.8);
\foreach \x in {0,1,2,3,4,5}
  \node[circle,draw,inner sep=2pt,fill=white] at (\x*1,3) {};
\node[circle,draw,inner sep=2pt,fill=white] at (2,3.8) {};
\node[anchor=west] at (5.5,3) {Exceptional finite type};

\node[anchor=east] at (-0.5,1.5) {$\E_8:$};
\draw (0,1.5) -- (6,1.5);
\draw (2,1.5) -- (2,2.3);
\foreach \x in {0,1,2,3,4,5,6}
  \node[circle,draw,inner sep=2pt,fill=white] at (\x*1,1.5) {};
\node[circle,draw,inner sep=2pt,fill=white] at (2,2.3) {};
\node[anchor=west] at (6.5,1.5) {Exceptional finite type};

\node[anchor=east] at (-0.5,0) {$\E_9:$};
\draw (0,0) -- (7,0);
\draw (2,0) -- (2,0.8);
\foreach \x in {0,1,2,3,4,5,6,7}
  \node[circle,draw,inner sep=2pt,fill=white] at (\x*1,0) {};
\node[circle,draw,inner sep=2pt,fill=white] at (2,0.8) {};
\node[anchor=west] at (7.5,0) {Affine type};

\node at (3,-1.2) {$\vdots$};

\node[anchor=east] at (-0.5,-2.6) {$\E_{9+n}:$};
\draw (0,-2.6) -- (6,-2.6);
\draw (6.15,-2.6) -- (6.35,-2.6);
\draw (7.15,-2.6) -- (7.5,-2.6);
\draw (2,-2.6) -- (2,-1.8);
\foreach \x in {0,1,2,3,4,5,6}
  \node[circle,draw,inner sep=2pt,fill=white] at (\x*1,-2.6) {};
\node[circle,draw,inner sep=2pt,fill=white] at (2,-1.8) {};
\node at (6.75,-2.6) {$\cdots$};
\node[circle,draw,inner sep=2pt,fill=white] at (7.5,-2.6) {};
\node[anchor=west] at (8,-2.6) {Highly-extended type};
\end{tikzpicture}
\end{center}

\medskip
\begin{definition} \label{def:2}
Using standard terminology, let us denote the Kac-Moody group $\K(\E_n)$ by just $\E_n$. Let $\BE_n$ denote the classifying space of $\E_n$ and define $\BE$ as the topological space
$$\BE := \displaystyle\operatorname*{hocolim}_{n} \,  \BE_n.$$ 
Note: The reader may be tempted to conclude that $\BE$ is equivalent to the classifying space of $\displaystyle\operatorname*{colim}  \E_n$. However, since the groups $\E_n$ are not locally compact for $n > 8$, the colimit of the underlying topological spaces need not have the structure of a topological group. We must therefore define the limiting topological group $\E$ homotopically as $\E := \Omega \BE$. 
\end{definition}

\medskip
\noindent
This leads us to the following natural question: 

\medskip
\begin{question}\label{ques:3.1}
Are the maps of classifying spaces $\BE_n \to \BE_{n+1}$ increasingly connective? In other words, do the spaces $\BE_n$ homologically stabilize to the space $\BE$? 
\end{question}

\noindent
In order to study question \ref{ques:3.1}, let us label the nodes in our  Dynkin diagrams as follows:

\begin{equation}\label{diagram:E9+n}
\begin{tikzpicture}[scale=0.7]
\node[anchor=east] at (-0.5,0) {$\E_{9+n}:$};

\draw (0,0) -- (9,0);
\draw (9.15,0) -- (9.35,0);
\draw (12.15,0) -- (12.5,0);
\draw (2,0) -- (2,0.8);
\foreach \x in {0,1,2,3,4,5,6,8,9}
  \node[circle,draw,inner sep=2pt,fill=white] at (\x*1,0) {};
\node[circle,draw,inner sep=2pt,fill=black] at (7,0) {};
\node[circle,draw,inner sep=2pt,fill=white] at (2,0.8) {};
\node at (9.8,0) {$\cdot$};
\node at (10.2,0) {$\cdot$};
\node at (10.6,0) {$\cdot$};
\node at (11.0,0) {$\cdot$};
\node at (11.4,0) {$\cdot$};
\node[circle,draw,inner sep=2pt,fill=white] at (12.5,0) {};

\node[below] at (0,-0.15) {\small $7$};
\node[below] at (1,-0.15) {\small $6$};
\node[below] at (2,-0.15) {\small $5$};
\node[below] at (3,-0.15) {\small $4$};
\node[below] at (4,-0.15) {\small $3$};
\node[below] at (5,-0.15) {\small $2$};
\node[below] at (6,-0.15) {\small $1$};
\node[below] at (7,-0.15) {\small $0$};
\node[below] at (8,-0.15) {\small $-1$};
\node[below] at (9,-0.15) {\small $-2$};
\node[below] at (12.5,-0.15) {\small $-n$};
\node[above] at (2,0.8) {\small $8$};

\draw[decorate,decoration={brace,amplitude=5pt,mirror}] (0,-0.8) -- (6.8,-0.8);
\node[below] at (3.4,-1.2) {$\E_9$};

\draw[decorate,decoration={brace,amplitude=5pt,mirror}] (8,-0.8) -- (12.8,-0.8);
\node[below] at (10.4,-1.2) {$\A_n$};
\end{tikzpicture}
\end{equation}

\noindent
The node labeled $0$ is filled in to indicate the fact that it plays a pivotal role as we shall see below. Let us make the following observation about the above diagram: 

\medskip
\begin{observation} \label{obs1}
Let $\mathcal{S}_n$ denote the category of spherical subsets for $\E_{9+n}$. Then we observe that $J$ is an object of $\mathcal{S}_n$ if and only if $J \cap I_0$ is a spherical subset for $\E_9$, where $I_0$ is the subset of indices $i \geq 0$ (so that $\A_{I_0} = \E_9)$. This implies that $\mathcal{S}_n$ contains a {\bf cofinal subcategory} $\mathcal{E}_n$
$$\mathcal{E}_n := \left\{ J \in \mathcal{S}_n \mid J \cap I_0 \text{ is spherical and } i \in J \text{ for all } i < 0 \right\}.$$ By theorem \ref{thm:hocolim}, and the cofinality of $\mathcal{E}_n$, we see that the following map is a homotopy equivalence
$$\underset{J \in \mathcal{E}_n}{\operatorname{hocolim}} \, \BH_J(\E_{9+n}) \longrightarrow \BE_{9+n}.$$
\end{observation}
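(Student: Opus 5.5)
The plan is to reduce sphericity to a check on connected components, verify the criterion case by case on the diagram \eqref{diagram:E9+n}, and then deduce cofinality from a minimal-element argument. Throughout I use the standard fact that a GCM is of finite type if and only if each connected component of its Dynkin diagram is of finite type. I also use that a full subdiagram of a finite type diagram is again of finite type.

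First I show that $J\in\mathcal{S}_n$ implies $J\cap I_0$ spherical. Since $J\cap I_0\subseteq J$ and full subdiagrams of finite type diagrams are of finite type, this direction is immediate. I also record that the spherical subsets of the affine diagram $\E_9$ are exactly the proper subsets of $I_0$.

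For the converse, suppose $J\cap I_0$ is spherical, so some index $i\ge 0$ is missing from $J$. The negative indices form an $\A_n$-chain attached to $I_0$ only through the node $0$. Hence every connected component of $J$ falls into one of two kinds:
\begin{itemize}
\item a component of $J\cap I_0$ not containing $0$, which is of finite type by hypothesis;
\item the component containing the negative nodes of $J$, which is $J'\cup T$. Here $J'$ is the component of $0$ in $J\cap I_0$ (possibly empty) and $T$ is a chain of negative indices attached at $0$.
\end{itemize}
It remains to see that $J'\cup T$ is of finite type, which I do by listing the possibilities for $J'$, a proper connected subset of $I_0$ containing $0$:
\begin{itemize}
\item If $J'$ does not reach the branch node $5$, or reaches it but omits $8$, or contains $5,8$ but not $6$, then $J'\cup T$ is a chain, i.e. of type $\A$.
\item If $J'$ contains $5,6,8$ but not $7$, then $J'\cup T$ has a branch point with arms of lengths $1,1,$ and arbitrary. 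This is of type $\D$.
\item $J'$ cannot contain all of $0,\dots,8$, since that is $I_0$ and $J\cap I_0$ is proper.
\end{itemize}
Hence every component of $J$ is of finite type, so $J\in\mathcal{S}_n$. This case check is the only place where the specific shape of the diagram enters, and it is the main thing to get right. A sanity check: removing a negative node $-k$ leaves a component containing $I_0$, which is not of finite type, consistent with the criterion.

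For cofinality, write $N=\{-1,\dots,-n\}$ for the set of negative indices. Given $J\in\mathcal{S}_n$, put $\bar J=J\cup N$. Then $\bar J\cap I_0=J\cap I_0$, so by the criterion $\bar J\in\mathcal{S}_n$, and hence $\bar J\in\mathcal{E}_n$. Moreover, $\bar J$ is the minimum of the poset $\{K\in\mathcal{E}_n : J\subseteq K\}$, because every such $K$ contains $N$. A poset with a minimum has contractible nerve, so every such under-category is contractible. Quillen's Theorem A (the cofinality theorem for homotopy colimits) then says that restricting along $\mathcal{E}_n\subseteq\mathcal{S}_n$ does not change the homotopy colimit of $J\mapsto\BH_J(\E_{9+n})$. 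Composing with the equivalence of Theorem \ref{thm:hocolim} yields the stated equivalence $\operatorname{hocolim}_{J\in\mathcal{E}_n}\BH_J(\E_{9+n})\simeq\BE_{9+n}$.
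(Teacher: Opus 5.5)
Your argument is correct and is exactly the reasoning the paper leaves implicit (it only asserts the criterion and the cofinality): the component of $J$ meeting the node $0$ is of type $\A$ or $\D$ unless it contains all of $I_0$, and $J\mapsto J\cup N$, being left adjoint to the inclusion $\mathcal{E}_n\subseteq\mathcal{S}_n$, makes every under-category contractible, so Theorem A applies. The only imprecision is that when $J$ omits some negative nodes its negative part may split into several components; those not adjacent to $0$ are chains of type $\A$, so the conclusion is unaffected.
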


\medskip
\noindent
Observation \ref{obs1} above allows us to prove the following theorem

\medskip
\begin{theorem} \label{main}
The spaces $\BE_n$ homologically stabilize to $\BE$. In particular, given any degree $k$ and a ring $\R$, the $\R$-modules $\HH^k(\E,\R)$ and $\HH^k(\BE, \R)$ are isomorphic to $\HH^k(\E_n, \R)$ and $\HH^k(\BE_n, \R)$ respectively, for sufficiently large $n$. 
\end{theorem}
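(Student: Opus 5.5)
We will compare the homotopy decompositions of Observation \ref{obs1} for $\BE_{9+n}$ and $\BE_{9+n+1}$ and show that the inclusion $\BE_{9+n}\to\BE_{9+n+1}$ is increasingly connected as $n$ grows. The key point is that the indexing categories $\mathcal{E}_n$ and $\mathcal{E}_{n+1}$ are canonically isomorphic. A set $J\in\mathcal{E}_n$ is determined by its spherical part $J\cap I_0\in\mathcal{S}(\E_9)$, since it must contain all negative indices. Thus both categories are isomorphic to the fixed finite poset $\mathcal{S}(\E_9)$, and the isomorphism is given by adjoining the new node $-(n+1)$.

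Under this identification, the inclusion $\E_{9+n}\subseteq \E_{9+n+1}$ induces a natural transformation of diagrams over $\mathcal{S}(\E_9)$:
$$\BH_J(\E_{9+n})\lra \BH_{J\cup\{-(n+1)\}}(\E_{9+n+1}).$$
We need to check that this map is natural in $J$ and compatible with the maps to $\BE_{9+n}$ and $\BE_{9+n+1}$. This follows from the functoriality of $\K(\A_I)$ in $I$ and the fact that $\HH_J\subseteq \HH_{J\cup\{-(n+1)\}}$ once the torus is enlarged.

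Next we analyse each component map. Write $L=J\cap I_0$ and $A=\{-1,\dots,-n\}$. Because $J$ is spherical, the diagram $\A_J$ splits as a union of connected components. The component containing $A$ is a finite-type diagram of the form $L'\cup A$, where $L'$ is the part of $L$ connected to node $0$ through $A$. Note that node $0$ may or may not lie in $L$; if it does not, then $A$ is its own component of type $\A_n$. Adding the node $-(n+1)$ extends this component at its tail.

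We check the possible types. If $0\notin L$, the component is $\A_n\to\A_{n+1}$. If $0\in L$, we get chains of type $\A$, or type $\D$ or $\E$ when the component reaches the branch node, such as $\D_m\to\D_{m+1}$ or $\A_m\to\A_{m+1}$. Once $n$ is large, sphericity forces only $\A$- and $\D$-type growth, because $\E$-type diagrams cannot be arbitrarily long.

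Up to torus factors and finite central quotients, the group $\HH_J$ is therefore
$$\T^{k}\times \K(\text{fixed part})\times \SU(m) \quad\text{or}\quad \T^{k}\times \K(\text{fixed part})\times \mathrm{Spin}(2m).$$
Adding the node replaces $\SU(m)$ by $\SU(m+1)$ or $\mathrm{Spin}(2m)$ by $\mathrm{Spin}(2m+2)$. The extra torus coordinate is absorbed, because the rank goes up by one on both sides. Classical stability then tells us that $\BSU(m)\to \BSU(m+1)$ and $\mathrm{BSpin}(2m)\to\mathrm{BSpin}(2m+2)$ are roughly $2m$-connected. Here $m\ge n-c$ for a constant $c$ independent of $J$.

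I expect the main obstacle to be making the identification of $\HH_J(\E_{9+n})\to\HH_{J'}(\E_{9+n+1})$ with these standard stabilization maps precise. In particular, we must handle the extra torus factor and possible finite quotients, and verify that the inclusion really is, up to this identification, the classical one. I would do this via maximal-rank subgroup structure: the map is an inclusion of compact connected Lie groups, and its fibre $\HH_{J'}/\HH_J$ equals $\SU(m+1)/\SU(m)=S^{2m+1}$ or $\mathrm{Spin}(2m+2)/\mathrm{Spin}(2m)$. The latter is the Stiefel-type manifold $V_{2m+2,2}$, which is highly connected. Hence each component map is $(n-c')$-connected.

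Finally, a natural transformation of diagrams over a fixed finite indexing category whose components are all $N$-connected induces an $N$-connected map on homotopy colimits. One can see this via the Bousfield–Kan spectral sequence, or by induction over skeleta of the nerve, which has bounded dimension here. So $\BE_{9+n}\to\BE_{9+n+1}$ is $(n-c')$-connected.

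Passing to the homotopy colimit, $\BE_n\to\BE$ is increasingly connected, which gives the statement for $\HH^k(\BE_n,\R)$ with arbitrary coefficients. For the groups themselves, we apply $\Omega$. Since $\E_n\simeq\Omega\BE_n$ (the $\E_n$ are connected topological groups) and $\E=\Omega\BE$ by Definition \ref{def:2}, the maps $\E_n\to\E$ are increasingly connected. The same is then true for cohomology in each fixed degree.
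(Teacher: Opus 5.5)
Your proposal is correct and follows the same route as the paper. Both arguments reindex the decomposition of Observation \ref{obs1} over the fixed poset $\mathcal{S}_0 \cong \mathcal{E}_n$, compare objectwise using homological stability of the compact Lie groups $\HH_J(\E_{9+n})$, and pass to the homotopy colimit. You also supply, soundly, details the paper leaves implicit: the fibre identification $\HH_{J'}/\HH_J \cong S^{2m+1}$ or $V_{2m+2,2}$ (the complementary coroot torus is shared), the fact that homotopy colimits preserve $N$-connected maps, and the looping step for $\E=\Omega\BE$. Your hedge about $\E$-type components is unnecessary: they never occur here, since the arm through node $0$ already has length at least $5$.
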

\begin{proof}
Recall the homotopy decomposition described in Observation \ref{obs1} above: 
$$\underset{J \in \mathcal{E}_n}{\operatorname{hocolim}} \, \BH_J(\E_{9+n}) \longrightarrow \BE_{9+n}.$$
Notice that the functor sending $J$ to $J \cap I_0$ defines and equivalence between $\mathcal{E}_n$ and  $\mathcal{S}_0$. It follows that $\mathcal{E}_n$ is independent of $n$ upto equivalence. Reindexing the above homotopy colimit over $\mathcal{S}_0$, we may therefore (objectwise) compare the above homotopy colimits as $n$ increases. Now for an fixed object $J_0 \in \mathcal{S}_0$, we obtain an infinite family (indexed by $n$) of {\em compact Lie groups} $\HH_{J_0}(\E_{n+9})$. Since the classifying spaces of all such families are known to homologically stabilize, we conclude from the above homotopy decomposition that the spaces $\BE_{9+n}$ also homologically stabilize. 
\end{proof}

\medskip
\noindent
One can say more about the stable cohomology ring $\HH^\ast(\BE, \R)$. 

\medskip
\begin{theorem}\label{thm2}
Fix a prime $l>5$, and any $\mathbb{Z}_{(l)}$-algebra $\R$.  Then the ring of Weyl invariants $\HH^\ast(\BT_n,\R)^{\W(\E_n)}$ stabilize in $n$, where $\T_n$ denotes the maximal torus of $\E_n$ and $\W(\E_n)$ denotes the Weyl group of $\E_n$ acting on the cohomology of $\BT_n$ in the canonical fashion. Let $\HH^\ast(\BT,\R)^{\W(\E)}$ denote the stable value of Weyl invariants (see Remark \ref{rem2}). 
Then the restriction map
\[ r : \HH^{\ast}(\BE, \R) \longrightarrow \HH^{\ast}(\BT,\R)^{\W(\E)} \]
is a surjection with kernel being the ideal of nilpotent elements in $\HH^\ast(\BE,\R)$. In addition, the ideal of nilpotent elements has exponent less than $9$. 
\end{theorem}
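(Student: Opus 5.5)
The plan is to work at a fixed level $n$ and pass to the limit at the end, using Theorem \ref{main} and Observation \ref{obs1}.

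\textbf{Step 1: a Bousfield--Kan spectral sequence.} Take the homotopy decomposition of Observation \ref{obs1}, reindexed over the finite poset $\mathcal{S}_0$ of spherical subsets of $\E_9$. It gives a spectral sequence
$$E_2^{p,q} = \lim{}^p_{J \in \mathcal{S}_0} \HH^q(\BH_J(\E_{9+n}),\R) \Longrightarrow \HH^{p+q}(\BE_{9+n},\R).$$
Every $\HH_J$ is a compact connected Lie group of maximal rank. The primes dividing $|\W(\A_J)|$ for spherical $J \subseteq I_0$ are only $2,3,5,7$: the Weyl groups of finite-type subdiagrams of $\E_9$ are products of Weyl groups of type $\A$, $\D$, $\E_6$, $\E_7$, $\E_8$.

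\textbf{Step 2: cohomology of the pieces.} For $l>5$ we must check that $l$ is not a torsion prime for these groups. The torsion primes here are at most $5$ (for $\E_8$), but the $\A_k$ factors coming from the $\A_n$ tail contribute $\SU$-type groups, which have no torsion. So for $l>5$,
$$\HH^*(\BH_J,\R) = \HH^*(\BT_{9+n},\R)^{\W(\A_J)}.$$
This is a functor $J \mapsto M^{\W_J}$ on $\mathcal{S}_0$, with $M = \HH^*(\BT,\R)$. The prime $7$ needs care: $7$ divides $|\W(\E_7)|$ and $|\W(\E_8)|$ but is not a torsion prime. That is presumably why the hypothesis is $l>5$ rather than $l>7$.

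\textbf{Step 3: identify $E_2$.} The $p=0$ column is $\lim_J M^{\W_J}$. Since $\W(\E_{9+n})$ is generated by the simple reflections, which all lie in the $\W_J$ for singletons $J$, this limit is $M^{\W(\E_{9+n})}$. The restriction map $r$ factors as the edge homomorphism $\HH^* \to E_\infty^{0,*} \subseteq E_2^{0,*}$.

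Since $\mathcal{S}_0$ has a nerve of dimension at most $8$ (the longest chain of spherical subsets of the 9-node diagram $\E_9$), we have $E_2^{p,q}=0$ for $p>8$. The standard argument then shows that the kernel of the edge map has nilpotence of order at most $9$: a product of $9$ elements of positive filtration lands in filtration $\geq 9$, which is zero. This gives the exponent bound.

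\textbf{Step 4: surjectivity onto invariants.} This is the main obstacle. We must show that the invariants $E_2^{0,*}$ survive, i.e.\ all differentials out of the $p=0$ column vanish. My first approach is to compare with the analogous spectral sequence for $\BW \times \BT$, i.e.\ for $\mathrm{E}\W\times_{\W}\BT$. Using the second decomposition in Theorem \ref{thm:hocolim},
$$\operatorname{hocolim}_J \BW(\A_J) \simeq \BW(\A_I),$$
the Borel construction $\mathrm{E}\W(\E_{9+n}) \times_{\W} \BT$ decomposes over $\mathcal{S}_0$ with pieces $\mathrm{E}\W_J \times_{\W_J} \BT$, which receive maps from $\BH_J$.

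At $l>5$, $l$ does not divide the order of any $\W_J$ except when $l=7$. In the non-modular case, the pieces $\mathrm{E}\W_J \times_{\W_J}\BT$ have cohomology exactly $M^{\W_J}$, so the map of spectral sequences is an isomorphism on $E_2$. Hence
$$\HH^*(\BE_{9+n}) \cong \HH^*(\mathrm{E}\W \times_\W \BT).$$
The latter surjects onto $M^{\W}$ via the restriction to the fiber $\BT$, so $r$ is onto. For $l=7$ I would instead argue directly that the invariants are detected by $\W_J$-equivariant classes, or reduce to that case.

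\textbf{Step 5: stabilization.} Stabilization of the invariant rings follows because the $E_2$-terms stabilize objectwise by Theorem \ref{main} together with the known stabilization for the Lie groups $\HH_J$. Passing to the limit in $n$ then yields both claims for $\BE$.
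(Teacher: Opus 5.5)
Your Steps 1--3 and 5 follow the paper's argument closely: the Bousfield--Kan spectral sequence over $\mathcal{E}_n\simeq\mathcal{S}_0$, vanishing of the columns $i\geq 9$ for the nilpotence bound, the identification of the zero column with $\HH^\ast(\BT_{9+n},\R)^{\W(\E_{9+n})}$, and objectwise stabilization. The gap is Step 4.

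Every $J\in\mathcal{E}_n$ contains all the negative nodes, so $\W_J$ contains $\W(\A_n)\cong\Sigma_{n+1}$. Your list of primes dividing $|\W_J|$ only looks at $J\subseteq I_0$. You accounted for the $\A_n$ tail in Step 2, but not here. As soon as $n+1\geq l$, the prime $l$ divides $|\W_J|$ for \emph{every} object of the indexing category. So in exactly the range that matters for $\BE$, the ``non-modular case'' never occurs.

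Then, for example with $\R=\mathbb{F}_l$, $\HH^\ast(\mathrm{E}\W_J\times_{\W_J}\BT;\R)$ is strictly larger than $\HH^\ast(\BT,\R)^{\W_J}$. The section of the Borel fibration injects $\HH^{>0}(\W_J;\mathbb{F}_l)$, which is nonzero because $l\mid|\W_J|$. Hence the comparison of spectral sequences is not an $E_2$-isomorphism, and the asserted isomorphism $\HH^\ast(\BE_{9+n};\R)\cong\HH^\ast(\mathrm{E}\W\times_{\W}\BT;\R)$ is not established. Your $l=7$ fallback is therefore not a side case: the obstruction is present for every $l$.

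Some smaller errors in the same step. The natural maps go $\BN_J(\T)\to\BH_J$, not out of $\BH_J$. Surjectivity of $\HH^\ast(\mathrm{E}\W\times_{\W}\BT)\to\HH^\ast(\BT)^{\W}$ is itself not automatic once $l$ divides the group order. Finally, the hypothesis $l>5$ concerns $l$-torsion in $\HH^\ast(\BH_J;\mathbb{Z})$, not $l\nmid|\W_J|$.

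What is missing is a way to kill the differentials out of the zero column without inverting $|\W_J|$. The paper uses the unstable Adams operation $\psi^p$ from the Appendix. The prime $p$ is chosen so that $\W(\E_{9+n})$ has no $p$-torsion (true for $p>n+10$ by Remark \ref{rem5}) and so that $p$ generates $(\mathbb{Z}/l)^\times$ (such $p$ exist by Dirichlet).

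Since $\HH^\ast(\BH_J;\mathbb{Z})$ has no $l$-torsion, the $E_1$-term injects into its rationalization, so $\psi^p$ acts on $E_2^{i,2j}$ by $p^j$. Compatibility with $d_{2r-1}\colon E^{i,2j}\to E^{i+2r-1,2j-2r+2}$ gives $p^{j-r+1}(p^{r-1}-1)\,d_{2r-1}=0$. Hence, for $r\geq 2$, $d_{2r-1}=0$ unless $(l-1)\mid(r-1)$, that is, unless $2r-1\geq 2l-1\geq 13$, which overshoots the last nonzero column $8$. So the spectral sequence collapses at $E_2$, and the edge map onto the Weyl invariants is surjective; the orders of the $\W_J$ never enter.
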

\begin{proof}
For the moment, we let $\R$ denote any arbitrary ring. The homotopy decomposition $\BE_{9+n}$ given in Observation \ref{obs1} gives rise to a cohomologically graded multiplicative spectral  called the Bousfield-Kan spectral sequence:
$$E_2^{i,j} = {\varprojlim_{J \in \mathcal{E}_n}}^{\!i} \, \HH^j(\BH_J(\E_{9+n}), \R) \Rightarrow \HH^{i+j}(\BE_{9+n}, \R).$$ 
The above $E_2$ term is computed via the standard simplicial resolution of $\mathcal{E}_n$ to calculate the derived functors of inverse limit over $\mathcal{E}_n$:
\[ E_1^{i,j} = \bigoplus_{J_1 < \cdots < J_i \in \mathcal{E}_n} \HH^j(\BH_{J_1}(\E_{9+n}), \R). \]

\medskip
\noindent
Since the categories $\mathcal{E}_n$ are all equivalent to $\mathcal{S}_0$, and because the longest sequence of non-trivial composable morphisms in $\mathcal{S}_0$ has length less than $9$, we notice that the terms $E_2^{i,j}$ of the above spectral sequence are trivial if $ i \geq 9$. From the multiplicative structure of the spectral sequence, we notice that the product of any $9$ elements in $\HH^\ast(\BE_{9+n},\R)$ supported on a non-zero column must be trivial. 

\medskip
\noindent
The groups $\HH_J(\E_{9+n})$ are easy to identify for any $J \in \mathcal{E}_n$. By Definition \ref{def:1}, these groups are torus extensions of a semi-simple, simply connected Lie group. This semi-simple factor in $\HH_J(\E_{9+n})$ has a Dynkin diagram which can be expressed as a disjoint union of a subdiagram of $\E_9$ with a diagram in the family $\A_n,\B_n,\C_n$ or $\D_n$. Hence one easily verifies that $\HH^\ast(\BH_J(\E_{9+n}), \mathbb{Z})$ has no $l$-torsion for $l > 5$. For such prime $l$, we deduce that the $E_1$-term of the Bousfield-Kan spectral sequence with coefficients in $\mathbb{Z}_{(l)}$ inject into its rationalization. 

\medskip
\noindent
Now Let $p$ be any prime  so that $\W(\E_{9+n})$ has no $p$-torsion. Then we observe that the unstable Adams operation $\psi^p$ (see Definition \ref{def:4}) acts on the Bousfield-Kan spectral sequence. If we choose coefficients in $\mathbb{Z}_{(l)}$ with $l > 5$, then the action of $\psi^p$  on the $E_1$-term is detected rationally. Hence $\psi^p$ acts by multiplication by $p^j$ on the term $E_2^{i,2j}$. Notice that if $\R$ is any $\mathbb{Z}_{(l)}$-module, the lack of torsion implies that $\HH^\ast(\BH_J(\E_{9+n}), \R) = \HH^\ast(\BH_J(\E_{9+n}), \mathbb{Z}_{(l)}) \otimes \R$. In particular, the action of $\psi^p$ has the same description with coefficients in $\R$.

\medskip
\noindent
Let us now pick $p$ so that it generates the cyclic group $(\mathbb{Z}/l)^\times$. Since $\psi^p$ commutes with the differentials, we have the equalities for $x \in E_{2r-1}^{i,2j}$:
\[ p^{j-r+1} d_{2r-1}(x) = \psi^p d_{2r-1}(x) = d_{2r-1} \psi^p(x) = d_{2r-1} p^j(x) = p^j d_{2r-1}(x). \]
In particular we have:
\[ p^{j-r+1}(p^{r-1}-1) d_{2r-1} (x) = 0. \]
The above equality implies that $(l-1)$ must divide $(r-1)$. But since $l-1$ is larger than $4$, and since the terms of the spectral sequence are trivial beyond the 8th column, we see that there can be no non-trivial differentials. In other words, the spectral sequence collapses at $E_2$. It also follows that the edge homomorphism maps $\HH^\ast(\BE_{9+n},\R)$ surjectively onto the zero-column (which is easily seen to have no nilpotent elements) $${\varprojlim_{J \in \mathcal{E}_n}}^{\!0} \, \HH^j(\BH_J(\E_{9+n}), \R).$$
We conclude that the edge homomorphism is surjective, with kernel precisely the ideal of nilpotent elements (which has exponent less than $9$). Next, we identify the above inverse limit with the Weyl-invariants. This will also prove that the invariants stabilize in $n$. 

\medskip
\noindent
We start by recalling the well-known fact that if $\K$ is a compact connected Lie group, and $l$ is a any odd prime so that $\HH^\ast(\BK,\mathbb{Z})$ has no $l$-torsion, then the following restriction map $$ r : \HH^\ast(\BK,\R) \longrightarrow \HH^\ast(\BT,\R)^{\W(\K)}$$ is an isomorphism where $\R$ is any $\mathbb{Z}_{(l)}$-algebra, and $\T \subseteq \K$ denotes the maximal torus with Weyl group $\W(\K)$. To prove this fact, notice that the lack of torsion shows that $\HH^\ast(\BK,\R)$ is evenly graded. It follows from this that the Serre spectral sequence for the fibration $\K/\T \rightarrow \BT \rightarrow \BK$ collapses. This shows that $\HH^\ast(\BK,\R)$ is a subring of $\HH^\ast(\BT,\R)^{\W(\K)}$. Now let us consider the $\W(\K)$-action on the $E_\infty$-term of the above Serre spectral sequence. Invoking \cite{Ki3}(Corollary 5.8), we see that $\HH^\ast(\K/\T,\R)^{\W(\K)} = \R$ and hence we see that $$E_{\infty}^{\W(\K)} = (\HH^\ast(\K/\T,\R) \otimes \HH^\ast(\BK,\R))^{\W(\K)} = \HH^\ast(\BK,\R).$$ An easy filtration argument now shows that the map $r$ above must also be a surjection. 

\medskip
\noindent
Invoking the above fact, one sees that the following restriction map is an isomorphism $$r  :\HH^j(\BH_J(\E_{9+n}), \R) \longrightarrow \HH^\ast(\BT_{9+n},\R)^{\W(\HH_J(\E_{9+n}))}$$ 
for all $J \in \mathcal{E}_n$. 
Using the above isomorphism we obtain a sequence of equalities
\[  {\varprojlim_{J \in \mathcal{E}_n}}^{\!0} \, \HH^j(\BH_J(\E_{9+n}), \R) = \bigcap_{J \in \mathcal{E}_n} \HH^\ast(\BT_{9+n}, \R)^{\W(\HH_J(\E_{9+n}))} = \bigcap_{J \in \mathcal{S}_n} \HH^\ast(\BT_{9+n}, \R)^{\W(\HH_J(\E_{9+n}))}, \]
where we have identified elements of the inverse limit inside $\HH^\ast(\BT,\R)$, and used the cofinality of $\mathcal{E}_n$ in the second equality. Now notice that the groups $\W(\HH_J(\E_{9+n}))$ are generated by the reflections $r_i$ for singletons $i \in I_{9+n}$. These singletons are elements of $\mathcal{S}_n$. Hence we can identify the above intersection with $\HH^\ast(\BT_{9+n},\R)^{\W(\E_{9+n})}$. 
\end{proof}

\medskip
\noindent
In the discussion above, we have illustrated our results for the family $\E_n$. However, the arguments apply much more generally as we now proceed to show. 

\medskip
\begin{definition}\label{StableM}
Let $\M_n$ be a family of Kac-Moody groups with the property that for some fixed $n_0$, the Dynkin diagram of $\M_{n_0}$ admits a distinguished node labeled $0$ and so that the Dynkin diagram for $\M_{n_0+n}$ is obtained by extending the node $0$ by the Dynkin diagram of $\A_n$ along nodes labeled by the negative integers as illustrated in the diagram (\ref{diagram:E9+n}) for $\E_{9+n}$ shown above. Furthermore, assume that the family $\M_n$ satisfies the property described in Observation \ref{obs1}. In other words, we assume that a subset $J$ is a spherical poset for $\M_{n_0+n}$ if and only if $J \cap I_0$ is a spherical poset for $\M_{n_0}$ where $I_0$ denotes the set of nodes in the Dynkin diagram for $\M_{n_0}$. 
\end{definition}

\medskip
\begin{remark} \label{rem1}
Families of the form $\M_n$ considered above are extremely easy to construct. For instance, we may start with the Dynkin diagram of any Kac-Moody group, and increasingly extend any chosen node linearly in one direction. It is easy to see that this procedure will either define an infinite family of compact Lie groups, or eventually define a diagram $\M_{n_0}$ so that all subsequent extensions $\M_{n_0+n}$ will satisfy the conditions described above. For instance, if one begins with the Dynkin diagram of $\A_4$ and performs the above procedure starting with the second node, then one recovers the family $\E_n$ with $n_0 = 9$ as is evident from the diagram (\ref{diagram:E9+n}) for $\E_{9+n}$ shown above. 
\end{remark}

\medskip
\noindent
The proof of the following theorem is identical to the one given for Theorem \ref{main}. 

\medskip
\begin{theorem}\label{main2}
Given a family $\M_n$ as in Definition \ref{StableM} above, the spaces $\BM_n$ homologically stabilize to $\BM$. In particular, given any degree $k$ and a ring $\R$, the $\R$-modules $\HH^k(\M,\R)$ and $\HH^k(\BM, \R)$ are isomorphic to $\HH^k(\M_n, \R)$ and $\HH^k(\BM_n, \R)$ respectively, for sufficiently large $n$. 
\end{theorem}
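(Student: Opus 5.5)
The plan is to follow the proof of Theorem \ref{main}, replacing $\E_9$ by $\M_{n_0}$. Let $I_0$ be the node set of $\M_{n_0}$ and $\mathcal{S}_0 = \mathcal{S}(\M_{n_0})$. Let $\mathcal{S}_n$ be the poset of spherical subsets of $\M_{n_0+n}$, and set
$$\mathcal{E}_n := \{ J \in \mathcal{S}_n \mid i \in J \text{ for all } i<0\}.$$
By the hypothesis in Definition \ref{StableM}, $J$ is spherical if and only if $J\cap I_0$ is. Hence $J \cup \{-1,\dots,-n\}$ is spherical whenever $J$ is. So every $J\in\mathcal{S}_n$ lies below an element of $\mathcal{E}_n$, and the intersection of $\mathcal{E}_n$ with the subposet of elements above $J$ has a minimum. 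This makes $\mathcal{E}_n$ cofinal, with contractible comma categories. Theorem \ref{thm:hocolim} then gives an equivalence
$$\underset{J\in\mathcal{E}_n}{\operatorname{hocolim}}\,\BH_J(\M_{n_0+n})\lra \BM_{n_0+n}.$$

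Next I would check that $J\mapsto J\cap I_0$ is an isomorphism of posets $\mathcal{E}_n\cong\mathcal{S}_0$. Its inverse is $J_0\mapsto J_0\cup\{-1,\dots,-n\}$, and this lands in $\mathcal{E}_n$ by the hypothesis. These isomorphisms are compatible with the inclusions $\M_{n_0+n}\subseteq\M_{n_0+n+1}$ coming from the naturality of $\K(\A_I)$ in $I$. So the stabilization maps $\BM_{n_0+n}\to\BM_{n_0+n+1}$ are induced by a natural transformation of $\mathcal{S}_0$-diagrams
$$\BH_{J_0\cup\{-1,\dots,-n\}}(\M_{n_0+n})\lra \BH_{J_0\cup\{-1,\dots,-n-1\}}(\M_{n_0+n+1}).$$

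For fixed $J_0\in\mathcal{S}_0$, I would identify this family of compact Lie groups. The group is the torus $\T_{n_0+n}$ together with the simply connected semisimple group whose Dynkin diagram is $J_0\cup\{-1,\dots,-n\}$. If $0\notin J_0$, that diagram is $\A_{J_0}\sqcup\A_n$. If $0\in J_0$, it is $\A_{J_0}$ with the component containing $0$ extended linearly by $n$ nodes. Since the result is of finite type for every $n$, this component must eventually be of type $\A_m$, $\B_m$, $\C_m$ or $\D_m$, extended at its end. I expect this step to be the main obstacle. One must verify that the induced maps $\BH\to\BH'$ become increasingly connected, in both the classical-group factors and the torus factor. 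For the torus, I would use that the rank of the torus in $\HH$ equals $|I|-|J|$, which is independent of $n$. The torus and semisimple parts may intersect in a finite central subgroup, so I would pass to a finite cover or argue via the fibration $\B(\text{semisimple})\to\BH\to\B(\text{torus quotient})$. This reduces the question to classical stability of $\BSU(m)\to\BSU(m+1)$, $\mathrm{BSpin}$ and $\mathrm{BSp}$, whose connectivity grows linearly in $m$.

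Finally, $\mathcal{S}_0$ is a fixed finite poset whose nerve has dimension at most $|I_0|$. A map of diagrams over it that is objectwise $c$-connected therefore induces a $c$-connected map of homotopy colimits. One can see this either from the Bousfield--Kan spectral sequence used in the proof of Theorem \ref{thm2}, or by a skeletal induction. Hence $\BM_{n_0+n}\to\BM_{n_0+n+1}$ is $c(n)$-connected with $c(n)\to\infty$, so $\HH^k(\BM_n,\R)\cong\HH^k(\BM,\R)$ for large $n$. Since $\BM_n$ is simply connected, looping these increasingly connected maps gives the same conclusion for $\M_n=\Omega\BM_n$ and $\M=\Omega\BM$, because a map of simply connected spaces that is $c$-connected loops to a $(c-1)$-connected map.
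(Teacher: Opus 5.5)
Your proposal is correct and follows the paper's argument: restrict to the cofinal subposet $\mathcal{E}_n$, identify it with $\mathcal{S}_0$ via $J\mapsto J\cap I_0$, and compare the homotopy colimits objectwise using homological stability of the compact Lie groups $\HH_{J_0\cup\{-1,\dots,-n\}}(\M_{n_0+n})$. You also fill in details the paper leaves implicit, and they are sound: initial objects in the under-categories give cofinality; the fibration over the torus $\T/\T_J$ reduces to classical $\A,\B,\C,\D$ stability, where equal rank alone is not enough and you need the base map to be an isomorphism (it is, since its cocharacter lattice is spanned by the coroots indexed by $I_0\setminus J_0$ in every case); homotopy colimits preserve connectivity; and looping handles $\M_n$.
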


\medskip
\begin{remark}\label{rem2}
The argument of theorem \ref{main} applied to the homotopy decomposition of theorem \ref{thm:hocolim} for the classifying space of the Weyl groups $\W(\M_n)$ of any family $\M_n$ as in Definition \ref{StableM} above, shows that the classifying spaces $\BW(\M_n)$ also homologically stabilize to the classifying space $\BW(\M)$ where $\W(\M) := \displaystyle\operatorname*{colim}_{n}  \W(\M_n)$.

\end{remark}

\medskip
\noindent
The analog of Theorem \ref{thm2} also holds for any family $\M_n$ as in Definition \ref{StableM} above. The proof of the following theorem is identical to the one given for Theorem \ref{thm2}. 

\medskip
\begin{theorem}\label{main3}
Let $\M_n$ be a family as in Definition \ref{StableM}. Let us fix any odd prime $l$ so that $2l \geq n_0 + 1$ and $\HH^\ast(\BH_J(\M_{n_0}), \mathbb{Z})$ has no $l$-torsion for all $J \in \mathcal{S}_0$\footnote{Both these conditions hold for any $l > n_0+1$. See Remark \ref{rem5}.}. 
Then for any $\mathbb{Z}_{(l)}$-algebra $\R$, the ring of Weyl invariants $\HH^\ast(\BT_n,\R)^{\W(\M_n)}$ stabilize in $n$, where $\T_n$ denotes the maximal torus of $\M_n$ and $\W(\M_n)$ denotes the Weyl group of $\M_n$ acting on the cohomology of $\BT_n$ in the canonical fashion. Let $\HH^\ast(\BT,\R)^{\W(\M)}$ denote the stable value of Weyl invariants. Then the restriction map $$ r : \HH^\ast(\BM,\R) \longrightarrow \HH^\ast(\BT,\R)^{\W(\M)}$$ is surjective, with the kernel being the ideal of nilpotent elements in $\HH^\ast(\BM,\R)$. In addition, the ideal of nilpotent elements has exponent less than $n_0$. 
\end{theorem}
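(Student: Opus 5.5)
The plan is to follow the proof of Theorem \ref{thm2} verbatim, replacing $\E_9$ by $\M_{n_0}$ and the constant $9$ by $n_0$. I will track where the two hypotheses on $l$ enter.

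\emph{Step 1: the spectral sequence and its vanishing line.} By Definition \ref{StableM}, the cofinal subcategory $\mathcal{E}_n \subseteq \mathcal{S}_n$ of Observation \ref{obs1} is defined exactly as for $\E_{9+n}$, and $J \mapsto J\cap I_0$ gives an equivalence $\mathcal{E}_n \simeq \mathcal{S}_0$. Theorem \ref{thm:hocolim} then yields a multiplicative Bousfield--Kan spectral sequence
$$E_2^{i,j} = {\varprojlim_{J \in \mathcal{E}_n}}^{\!i}\, \HH^j(\BH_J(\M_{n_0+n}),\R) \Rightarrow \HH^{i+j}(\BM_{n_0+n},\R).$$
It is computed from the nerve of $\mathcal{S}_0$. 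A strict chain $J_1<\cdots<J_i$ of spherical subsets of $I_0$ has length at most $|I_0| = n_0$. Moreover, $I_0$ itself is not spherical in the stable range; otherwise the extensions would remain of finite type. So chains have at most $n_0$ elements, and $E_2^{i,*}=0$ for $i\ge n_0$. Multiplicativity then shows that any product of $n_0$ elements of positive filtration vanishes. This gives the exponent bound.

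\emph{Step 2: torsion-freeness and collapse.} Each $\HH_J(\M_{n_0+n})$ is a torus extension of a simply connected group. Its Dynkin diagram is a disjoint union of a diagram $J\cap I_0$ minus the node $0$ and an $\A$-type (or classical) diagram. Classical groups have torsion only at $2$, so the hypothesis on $\HH^*(\BH_J(\M_{n_0}),\mathbb{Z})$ ensures that these groups have no $l$-torsion. Hence the $E_1$-term with $\mathbb{Z}_{(l)}$ coefficients is torsion free, even, and detected rationally. By base change, the same holds for any $\mathbb{Z}_{(l)}$-algebra $\R$.

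Next choose a prime $p$ generating $(\mathbb{Z}/l)^\times$ and prime to the orders of the finite Weyl groups involved. The unstable Adams operation $\psi^p$ acts on the spectral sequence, and it acts by $p^j$ on $E_2^{i,2j}$. The argument of Theorem \ref{thm2} then forces $(l-1)\mid (r-1)$ for any nonzero $d_{2r-1}$. A nonzero differential $d_{2r-1}$ needs $2r-1 \le n_0-1$, so $r-1 \le n_0/2 - 1$. The condition $2l\ge n_0+1$ gives $l-1 > r-1 \ge 1$, which is impossible. So the spectral sequence collapses at $E_2$. The edge map onto the zero column is then surjective, with kernel the positive-filtration part. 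That kernel is nilpotent by Step 1, and the zero column is reduced, so the kernel equals the nilradical.

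\emph{Step 3: identifying the zero column.} The torsion-free argument given in the proof of Theorem \ref{thm2}, using Corollary 5.8 of \cite{Ki3}, shows that $\HH^*(\BH_J,\R)\cong \HH^*(\BT_n,\R)^{\W(\HH_J)}$. Hence $\varprojlim^0$ is the intersection of these invariant rings. All singletons are spherical, so this intersection is $\HH^*(\BT_n,\R)^{\W(\M_n)}$. By Step 2 this ring is a quotient of $\HH^*(\BM_n,\R)$ compatibly in $n$. Theorem \ref{main2} says the latter stabilizes degreewise. The inverse-limit description is also objectwise compatible over the fixed category $\mathcal{S}_0$, and the finite groups $\HH_J(\M_{n_0+n})$ have stabilizing cohomology, so the invariants stabilize as well. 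Passing to $\BM$ gives the stated result.

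I expect the main obstacle to be the bookkeeping in Step 2. Two things must be checked: that $2l\ge n_0+1$ really excludes every differential allowed by the vanishing line, and that a single prime $p$ can be chosen to work for all finite Weyl groups $\W(\HH_J)$ uniformly in $n$. These groups grow with $n$. For the second point I would instead let $\psi^p$ be defined on each finite stage and pass to the limit.
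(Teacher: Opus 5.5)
Your proposal is correct and is essentially the paper's own proof, which is just the proof of Theorem \ref{thm2} with $\E_9$ and $9$ replaced by $\M_{n_0}$ and $n_0$: the same Bousfield--Kan spectral sequence over $\mathcal{E}_n\simeq\mathcal{S}_0$ with $E_2^{i,*}=0$ for $i\ge n_0$, the same weight argument with $\psi^p$ (your check that $2l\ge n_0+1$ forces $0<r-1<l-1$ is exactly what that hypothesis is for), and the same identification of the zero column with $\HH^*(\BT_n,\R)^{\W(\M_n)}$ via the compact-Lie-group fact and the generation of $\W(\M_n)$ by simple reflections. Your concern about a uniform $p$ is handled as you propose, since the paper also fixes the stage $n$ before choosing $p$; for each $n$, Dirichlet's theorem supplies a prime $p>n_0+n+1$ in the residue class of a generator of $(\mathbb{Z}/l)^\times$. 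As in the paper, your step ``the zero column is reduced'' tacitly requires $\R$ to be reduced.
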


\bigskip
\section{The emergent structure}\label{sec:structure}

\noindent
In this section, we briefly sketch some of the emmergent structure that arises when we stabilize the classifying spaces of the families considered above. As before, we will illustrate our results in our prototypical example of the family $\E_n$, though the constructions given below apply in full generality. 

\medskip
\noindent
Given the Dynkin diagram of $\E_{n+m}$, for large $n$, let us construct a sub-Dynkin diagram by removing the $m$'th node from the right end of the Dynkin diagram for $\E_{n+m}$. The result is a  disconnected diagram given by $\E_n \coprod \A_{m-1}$. The inclusion of the sub-Dynkin diagram defines the group homomorphisms $\E_n \times \SU(m) \longrightarrow \E_{n+m}$: 

\medskip

\begin{center}
\begin{tikzpicture}[baseline=-0.5ex]
\node[anchor=south] at (1.5,1.5) {$\E_n$};
\draw (0,0.8) -- (2.4,0.8);
\draw (2.55,0.8) -- (2.75,0.8);
\draw (3.25,0.8) -- (3.6,0.8);
\draw (1.6,0.8) -- (1.6,1.5);
\foreach \x in {0,1,2,3}
  \node[circle,draw,inner sep=2pt,fill=white] at (\x*0.8,0.8) {};
\node[circle,draw,inner sep=2pt,fill=white] at (1.6,1.5) {};
\node at (3,0.8) {$\cdots$};
\node[circle,draw,inner sep=2pt,fill=white] at (3.6,0.8) {};

\node at (4.2,0.8) {$\times$};

\node[anchor=south] at (6.5,1.5) {$\SU(m)$};
\draw (4.9,0.8) -- (6.5,0.8);
\draw (6.65,0.8) -- (6.85,0.8);
\draw (7.35,0.8) -- (7.7,0.8);
\foreach \x in {0,1,2}
  \node[circle,draw,inner sep=2pt,fill=white] at (4.9+\x*0.8,0.8) {};
\node at (7.1,0.8) {$\cdots$};
\node[circle,draw,inner sep=2pt,fill=white] at (7.7,0.8) {};

\draw[->,thick] (8.3,0.8) -- (9.3,0.8);

\node[anchor=south] at (12.5,1.5) {$\E_{n+m}$};
\draw (9.8,0.8) -- (13.8,0.8);
\draw (13.95,0.8) -- (14.15,0.8);
\draw (14.8,0.8) -- (15.2,0.8);
\draw (11.4,0.8) -- (11.4,1.5);
\foreach \x in {0,1,2,3,4,5}
  \node[circle,draw,inner sep=2pt,fill=white] at (9.8+\x*0.8,0.8) {};
\node[circle,draw,inner sep=2pt,fill=white] at (11.4,1.5) {};
\node at (14.4,0.8) {$\cdots$};
\node[circle,draw,inner sep=2pt,fill=white] at (15.2,0.8) {};
\end{tikzpicture}
\end{center}
\noindent
Classifying the above gives rise to a map of spaces:
$$\BE_n \times \BSU(m) \longrightarrow \BE_{n+m}.$$
Now consider the following diagram with the horizontal maps being the ones constructed above and the vertical ones induced by the sequential inclusions of $\BE_n$:
\[
\xymatrix{
 \BE_n \times \BSU(m) \ar[d] \ar[r] &  \BE_{n+m} \ar[d] \\
\BE_{n+1} \times \BSU(m)   \ar[r] & \BE_{n+m+1} .
}
\]
This diagram commutes up to canonical homotopy. This homotopy is given by conjugation with the element $\sigma_m \in \SU(m+1) \subset \E_{n+m+1}$ that keeps $\E_{n}$ fixed and shifts the bottom block diagonal copy of $\SU(m) \subset \SU(m+1)$ to the top one. In the standard representation $\mathbb{C}^{m+1}$ of $\SU(m+1)$ the element $\sigma_m$ is given by the following tansformation in terms of the standard basis $\{ e_1,e_2,\ldots,e_{m+1} \}$:
$$ \sigma_m (e_i) = e_{i-1}, \quad i > 1, \quad \sigma_m(e_1) = (-1)^m e_{m+1}.$$
Let us identify $\sigma_{m_1}$ with $\sigma_{m_1} \times 1_{m_2}$, and $\sigma_{m_2}$ with $1_{m_1} \times \sigma_{m_2}$ in terms of the blocks of $\SU(m_1+m_2+1)$. Then it is easy to verify that they satsify $\sigma_{m_2} \sigma_{m_1} = \sigma_{m_1+m_2}$. 

\medskip
\noindent
The above compatible homotopies induced by the elements $\sigma_m$ allow us to construct a well defined $A_{\infty}$-action of the topological monoid $\coprod_m \BSU(m)$ on a $\mathbb{Z}$-graded disjoint union the spaces $\BE$
$$(\mathbb{Z} \times \BE) \times (\coprod_m \times \BSU(m)) \longrightarrow (\mathbb{Z} \times \BE),$$
with the action of $\BSU(m)$ shifting each component of $\BE$ by $m$. 
On group completing \cite{MS}, we obtain a right action of $\mathbb{Z} \times \BSU$ on $\mathbb{Z} \times \BE$:
$$(\mathbb{Z} \times \BE) \times (\mathbb{Z} \times \BSU) \longrightarrow (\mathbb{Z} \times \BE).$$
Restricting the above action to the zero component, and taking homotopy orbits we obtain a principal $\BSU$-fibration: 
$$\BSU \longrightarrow \BE \longrightarrow {\BE}_{h\BSU}$$
which is classfied by the canonical map ${\BE}_{h\BSU} \longrightarrow \BBSU$. Taking loop spaces, we get an extension of topological groups (up to homotopy):
$$\SU \longrightarrow \E \longrightarrow \E/\SU := \Omega({\BE}_{h\BSU}).$$

\medskip
\begin{observation}\label{observation2}
The above emergent structure can be interpreted as saying that $\E$-bundles admit a principal action by stable special unitary bundles and that the homotopy orbits under this action is given by principal $\E/\SU$-bundles. It would be very interesting to have a geometric understanding of this observation. 
\end{observation}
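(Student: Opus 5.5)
The plan is to make Observation \ref{observation2} precise at the level of classifying spaces, using only the action
$$(\mathbb{Z} \times \BE) \times (\mathbb{Z} \times \BSU) \longrightarrow (\mathbb{Z} \times \BE)$$
constructed above. Its restriction to zero components is an action of the group-like $A_\infty$-space $\BSU$ on $\BE$. For a CW complex $X$ I interpret the terms as follows. An \emph{$\E$-bundle} on $X$ is a homotopy class of maps $X \to \BE$; since $\E := \Omega \BE$, these are exactly principal $\E$-bundles in the homotopical sense of Definition \ref{def:2}. A \emph{stable special unitary bundle} is a map $X \to \BSU$, i.e.\ a rank zero virtual bundle with trivialized determinant. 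The statement to prove is then that $\mathrm{Map}(X,\BSU)$ acts on $\mathrm{Map}(X,\BE)$, and that the map $\mathrm{Map}(X,\BE) \to \mathrm{Map}(X,{\BE}_{h\BSU})$ is a principal fibration for this action over its image, with image the $\E/\SU$-bundles whose characteristic class in $\BBSU$ vanishes.

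\textbf{Step 1 (the action).} Apply $\mathrm{Map}(X,-)$ to the action map. Because the action is $A_\infty$ and the operations are pointwise, this gives an $A_\infty$-action of the group-like space $\mathrm{Map}(X,\BSU)$ on $\mathrm{Map}(X,\BE)$. Concretely, an $\E$-bundle $P$ and a stable $\SU$-bundle $V$ combine to an $\E$-bundle $P \cdot V$. Unstably, $P \cdot V$ is $P$ extended along $\E_n \times \SU(m) \to \E_{n+m}$ and then shifted back to the zero component using the group completion.

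\textbf{Step 2 (principality).} Use the Borel fibration
$$\BE \to {\BE}_{h\BSU} \to \BBSU.$$
For any action of a group-like $G$ on $Y$, the homotopy fiber of $Y_{hG} \to \BG$ is $Y$, with $G$ acting on it by the given action. Hence $\BE$ is the homotopy fiber of the classifying map ${\BE}_{h\BSU} \to \BBSU$, which is the statement that the fibration $\BSU \to \BE \to {\BE}_{h\BSU}$ is principal. Mapping $X$ into this fiber sequence gives a fiber sequence of mapping spaces. So the homotopy fiber of $\mathrm{Map}(X,\BE) \to \mathrm{Map}(X,{\BE}_{h\BSU})$ over a point $f$ is the space of null-homotopies of the composite $X \to \BBSU$. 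That space is either empty or a torsor for $\Omega \mathrm{Map}(X,\BBSU) \simeq \mathrm{Map}(X,\BSU)$.

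\textbf{Step 3 (compatibility and homotopy orbits).} Check that this torsor structure agrees with the action from Step 1; this holds because both come from the same Borel construction. It then follows that two $\E$-bundles have the same image, i.e.\ the same associated $\E/\SU$-bundle (a map $X \to {\BE}_{h\BSU} = B(\E/\SU)$), if and only if they differ by the action of a stable $\SU$-bundle. Hence the homotopy orbits of $\mathrm{Map}(X,\BE)$ under $\mathrm{Map}(X,\BSU)$ are identified with the components of $\mathrm{Map}(X,B(\E/\SU))$ on which the obstruction class in $[X,\BBSU]$ vanishes.

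\textbf{Main obstacle.} The crux is the input from the construction above: that the homotopies given by the elements $\sigma_m$ really assemble into a coherent $A_\infty$-action, and that group completion \cite{MS} produces a genuine action of $\mathbb{Z}\times\BSU$ rather than merely an action up to homotopy. The key point here is the identity $\sigma_{m_2}\sigma_{m_1}=\sigma_{m_1+m_2}$, which gives strict associativity of the conjugating elements. I would verify the higher coherences by replacing the groups with a strictly associative model, namely the sequential colimit of the $\BE_n$ realized by a simplicial construction compatible with the block inclusions. Given that, Steps 1 through 3 are formal. The genuinely open part of the observation is a geometric rather than homotopical model of this action, and the plan does not address it.
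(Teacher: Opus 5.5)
Your proposal is correct and follows the paper's own route. The paper gives no separate proof here: the observation just interprets the preceding construction, namely the $A_\infty$-action built from the $\sigma_m$, its group completion, and the Borel fiber sequence $\BSU \to \BE \to \BE_{h\BSU} \to \BBSU$, looped to $\SU \to \E \to \E/\SU$. You take that same construction as input and make the interpretation precise by mapping $X$ into the fiber sequence.

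Your version also sharpens the paper's wording by stating that the homotopy orbits recover only those $\E/\SU$-bundles whose obstruction class in $[X,\BBSU]$ vanishes. As in the paper, the coherence of the $\sigma_m$-homotopies remains an assumed input rather than something either argument proves.
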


\medskip
\section{Appendix}

\medskip
\noindent
In this section we recall the construction of the unstable Adams operations on $\BK(\A)$ (\cite{Ki4}), which is required  to prove Theorem \ref{thm2}. 

\medskip
\noindent
Given a generalized Cartan matrix $\A_I$, recall the subgroups $\HH_J(\A_I) \subseteq \K(A_I)$ for $J \in \mathcal{S}(\A_I)$ as given in Definition \ref{def:1}. These groups admit complexifications $\G_J(\A_I) \subseteq \G(\A_I)$ called the Levi subgroups of the minimal Kac-Moody group $\G(\A_I)$. Indeed, one has split-forms $\G_J(\A_I)_{\mathbb Z}$ for these subgroups, so that one may take points of $\G_J(\A_I)$ over arbitrary rings. 

\medskip
\noindent
Given an odd prime $p$, let $\overline{\mathbb F}_p$ denote the algebraic closure of the prime field $\mathbb{F}_p$ and let $\Wt(\overline{\mathbb F}_p)$ denote the ring of Witt-vectors over $\overline{\mathbb{F}}_p$. We fix an embedding of rings $\Wt(\overline{\mathbb {F}}_p) \subset \mathbb{C}$ and extend the Frobenius automorphism $\Fr_p$ of $\Wt({\overline{\mathbb F}}_p)$ to an automorphism $\Fr_p$ of $\mathbb C$.

\medskip
\begin{remark} \label{rem3}
Notice that the Teichm\"uller lift of the units $\overline{\mathbb F}_p^{\times} \subset \Wt(\overline{\mathbb {F}}_p)^{\times} \subset \mathbb{C}^{\times}$ lands in the roots of unity in $\mathbb{C}$, and that the automorphism $\Fr_p$ of $\mathbb{C}$ acts via the degree $p$-map on this image. 
\end{remark}

\medskip
\noindent
In \cite{FM} (Theorem 1.4 and Prop.2.3), it is established that there is an equivalence of spaces 
\begin{equation}\label{zigzag0} \prod_{q \neq p} \BG_J(\A_I)(\overline{\mathbb F}_p) \, \hat{}_q \llra{d} \prod_{q \neq p} \BG_J(\A_I) \, \hat{}_q.
\end{equation}
Let $\N_J(\T)(\overline{\mathbb F}_p) \subset \G_J(\A_I)(\overline{\mathbb F}_p)$ denote the $\overline{\mathbb{F}}_p$-points of the normalizer of the (split) torus in $\G_J(\A_I)_{\mathbb Z}$. We observe that there is a canonical map that extends the Teichm\"uller lift 
\[ \iota : \N_J(\T)(\overline{\mathbb F}_p) \subset \N_J(\T), \] 
where $\N_J(\T)$ denotes the complex points of the normalizer of the split torus endowed with the analytic topology. Hence we have a zigzag of maps
\begin{equation}\label{zigzag1} \BN_J(\T) \llra{j} \prod_{q \neq p} \BN_J(\T)\, \hat{}_q \llla{\iota} \prod_{q \neq p} \BN_J(\T)(\overline{\mathbb F}_p) \, \hat{}_q \llra{k} 
\prod_{q \neq p} \BG_J(\A_I)(\overline{\mathbb F}_p) \, \hat{}_q
\end{equation}
where $j$ denotes the completion map, and the leftward moving map is induced by $\iota$ and is seen to be and equivalence since $\iota : \BN_J(\T)(\overline{\mathbb F}_p) \rightarrow \BN_J(\T)$ induces an isomorphism in mod $q$-homology for any $q \neq p$. Using Remark \ref{rem3}, we observe that all maps in zigzag (\ref{zigzag1}) are equivariant with respect to the respective self-maps $\Fr_p$, with $\Fr_p$ defined on $\BN_J(\T)$ as the map induced by the endomorphism of $\N_J(\T)$ that extends the degree $p$ self-map on $\T$ and fixes the Weyl group. Since $p$ is chosen to be an odd prime, such an extension is canonical and agrees with the map $\Fr_p$ defined on $\N_J(\T)(\overline{\mathbb F}_p)$ along the Teichumller lift $\iota$.  

\medskip
\noindent
Let us fix a functorial construction for homotopy pullbacks, and define $\tilde{\BN}_J(\T)$ as the homotopy pullback: 
\begin{equation}\label{zigzag2}
\xymatrix{
 \tilde{\BN}_J(\T) \ar[d]^{\tilde{\iota}} \ar[r]^{\tilde{j} \quad \quad} &  \prod_{q \neq p} \BN_J(\T)(\overline{\mathbb F}_p)\, \hat{}_q \ar[d]^{\iota} \\
\BN_J(\T)   \ar[r]^{j \quad \quad } & \prod_{q \neq p} \BN_J(\T)\, \hat{}_q .
}
\end{equation}
By the functoriality of the construction, we observe that $\tilde{\BN}_J(\T)$ supports a self-map $\Fr_p$ and that all maps are $\Fr_p$-equivariant. Furthermore, $\tilde{\iota}$ is an equivalence since $\iota$ was an equivalence. 

\medskip
\noindent
We now define an $\Fr_p$-equivariant space $\tilde{\BG}_J(\A_I)_{1/p}$ as the homotopy pullback: 
\begin{equation}\label{zigzag3}\xymatrix{
 \tilde{\BG}_J(\A_I)_{1/p} \ar[d]^{\tilde{\eta}} \ar[r] &  \prod_{q \neq p} \BG_J(\A_I)(\overline{\mathbb F}_p)\, \hat{}_q \ar[d] \\
\tilde{\BN}_J(\T)_{\mathbb Q}   \ar[r]^{(d \circ k \circ\tilde{j})_{\mathbb Q} \quad \quad \quad} & (\prod_{q \neq p} \BG_J(\A_I)(\overline{\mathbb F}_p)\, \hat{}_q)_{\mathbb Q} .
}
\end{equation}

\medskip
\noindent
For the rest of this section, we assume that $p$ has the property that the Weyl group $\W(\A_I)$ has no elements of $p$-torsion. 

\medskip
\begin{remark}\label{rem5}
We note that the condition that $\W(\A_I)$ has no elements of $p$-torsion defines a finite set of primes since $\W(\A_I)$ is a subgroup of a general linear group on a lattice of rank $|I|$, which can easily be shown to have no torsion for any $p > |I|+1$.
\end{remark}

\medskip
\noindent
With $p$ as above, we conclude: 

\begin{claim}\label{main4}
The following homotopy pullback is an $\Fr_p$-equivariant model for the space $\BG_J(A)$
\[\xymatrix{
 \tilde{\BG}_J(\A_I) \ar[d] \ar[r] &  \tilde{\BG}_J(\A_I)_{1/p} \ar[d]^{\tilde{\eta}} \\
\tilde{\BN}_J(\T)_{(p)}   \ar[r] & \tilde{\BN}_J(\T)_{\mathbb Q} .
}
\]
Note: The construction of $\Fr_p$ shows that its action on $\BN_J(\T)_{\mathbb Q}$, which is equivalent to $\tilde{\BG}_J(\A_I)_{\mathbb Q}$, is induced by the degree $p$ map on $\T$. 
\end{claim}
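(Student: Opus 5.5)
The plan is to recognize the square in the statement as an arithmetic fracture square for $X = \BG_J(\A_I)$, with the $p$-local and rational corners replaced by the corresponding localizations of $\BN_J(\T)$. We then check that these replacements are harmless under the standing assumption that $\W(\A_I)$ has no $p$-torsion.

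\textbf{Step 1: the fracture square.} Since $\G_J(\A_I)$ is connected, $X$ is simply connected and of finite type, so the standard fracture square applies. It exhibits $X$ as the homotopy pullback of $X_{(p)} \to X_{\mathbb Q} \leftarrow X_{1/p}$. In turn, $X_{1/p}$ is the homotopy pullback of $X_{\mathbb Q} \to (\prod_{q\neq p} X\hat{}_q)_{\mathbb Q} \leftarrow \prod_{q \neq p} X\hat{}_q$.

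\textbf{Step 2: replace the corners.} Consider the composite $c = d\circ k\circ \iota^{-1}\circ j$ from (\ref{zigzag1}) and (\ref{zigzag0}), realized on the nose by $d\circ k\circ\tilde j$ via the equivalence $\tilde\iota$.
\begin{itemize}
\item We must show that $c$ is homotopic to the completion away from $p$ of the inclusion $\BN_J(\T)\to \BG_J(\A_I)$. This should follow from the naturality of the Friedlander--Mislin equivalence $d$ with respect to the inclusion of the normalizer. Both maps are built from the inclusion of split group schemes $\N_J(\T)_{\mathbb Z}\subset \G_J(\A_I)_{\mathbb Z}$, compared through the Teichm\"uller lift.
\item The inclusion $\BN_J(\T)\to\BG_J(\A_I)$ is a $\mathbb{Z}_{(p)}$-homology equivalence. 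Its homotopy fiber is $\G_J/\N_J(\T)$, which has Euler characteristic $1$ and $\mathbb{Z}[1/|\W|]$-homology of a point. Alternatively, $\HH^\ast(\BN_J(\T),\mathbb Z_{(p)}) = \HH^\ast(\BT,\mathbb Z_{(p)})^{\W}$, and $\HH^\ast(\BG_J,\mathbb Z_{(p)})\to \HH^\ast(\BT,\mathbb Z_{(p)})^{\W}$ is an isomorphism by the Becker--Gottlieb transfer. Both descriptions use that $p \nmid |\W(\A_J)|$, which holds because $\W(\A_J)\subseteq \W(\A_I)$ has no $p$-torsion.
\end{itemize}
Consequently $\tilde\BN_J(\T)_{(p)}\simeq X_{(p)}$ and $\tilde\BN_J(\T)_{\mathbb Q}\simeq X_{\mathbb Q}$, compatibly with the maps in the square. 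Combined with Step 1 and (\ref{zigzag3}), this gives $\tilde\BG_J(\A_I)_{1/p}\simeq X_{1/p}$, and therefore the square in the claim is a model for $X$.

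\textbf{Step 3: equivariance.} Every space in the diagram is built by functorial localizations and functorial homotopy pullbacks. The inputs are the $\Fr_p$-equivariant zigzag (\ref{zigzag1}), the map $d$ (equivariant since it is induced by maps of group schemes), and the equivariant pullback (\ref{zigzag2}). Hence $\Fr_p$ induces a self-map of $\tilde\BG_J(\A_I)$ compatible with all the structure maps. On the rational corner, $\Fr_p$ is the rationalization of the self-map of $\BN_J(\T)$ that extends the degree $p$ map on $\T$ and fixes the Weyl group, which gives the Note.

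\textbf{Main obstacle.} I expect the main obstacle to be the compatibility check in Step 2. The fracture square glues along specific maps, so one needs the rationalized composite $(d\circ k\circ\tilde j)_{\mathbb Q}$ to agree, as a map and not merely abstractly, with the canonical rational comparison $X_{\mathbb Q}\to (\prod_{q\neq p} X\hat{}_q)_{\mathbb Q}$. I would first try to deduce this from the naturality statements in \cite{FM} applied to $\N_J(\T)\subset\G_J(\A_I)$. Failing that, I would argue that maps between these rational spaces are detected on rational cohomology, and that both maps induce the same map there: restriction to $\HH^\ast(\BT,\mathbb Q)^{\W}$.
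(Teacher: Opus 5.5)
Your proposal is correct and follows essentially the same route as the paper: identify $d\circ k\circ\tilde{j}$ with the completion away from $p$ of the inclusion $\BN_J(\T)\to\BG_J(\A_I)$, use the absence of $p$-torsion in the Weyl group to identify the $p$-local and rational corners built from $\tilde{\BN}_J(\T)$ with those of $\BG_J(\A_I)$, apply the two Bousfield--Kan fracture squares, and get equivariance from functoriality of the homotopy pullbacks (you in fact give more detail than the paper on why $\BN_J(\T)\to\BG_J(\A_I)$ is a $\mathbb{Z}_{(p)}$-homology equivalence). One small correction: $d$ is only needed to identify homotopy types, so equivariance should rest on $k\circ\tilde{j}$ landing in the $\overline{\mathbb F}_p$-points rather than on $d$ being equivariant, because the Frobenius of $\mathbb{C}$ is discontinuous and does not give a self-map of $\BG_J(\A_I)$ (analytic topology) for $d$ to commute with.
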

\begin{proof}
Given the map $d$ of (\ref{zigzag0}), the map $k$ of (\ref{zigzag1}) and the map $\tilde{j}$ of (\ref{zigzag2}), it is easy to confirm that the composite $(d \circ k \circ\tilde{j})$ is equivalent to the map induced by the inclusion of the maximal torus $\BN_J(\T) \subset \BG_J(\A_I)$ followed by the completion map at all primes $q \neq p$. Furthermore, since $\W(\A)$ contains no $p$-torsion, the same holds for the Weyl group of $\G_J(\A_I)$. It follows that $\tilde{\BN}_J(\T)_{(p)}$ and $\tilde{\BN}_J(\T)_{\mathbb Q}$ are equivalent to the spaces $\BG_J(\A_I)_{(p)}$ and $\BG_J(\A)_{\mathbb Q}$ respectively. It follows from the arithmetic fracture square \cite{BK}(Ch. VI, 8.1) that the homotopy pullback (\ref{zigzag3}) defines a model for $\BG_J(\A_I)$ localized away from the prime $p$. Then invoking the fracture square \cite{BK}(Ch.V, 6.3) shows that our claim above defines a model for $\BG_J(\A_I)$. Since all maps used in our constructions were $\Fr_p$-equivariant and out homotopy pullback constructions were functorial, the above model for $\BG_J(\A_I)$ also admits a self map induced by the maps $\Fr_p$.  
\end{proof}

\medskip
\begin{definition}\label{def:4}
Invoking the evident naturality of our constructions with respect to inclusions $J \subseteq K$ in the category $\mathcal{S}(\A_I)$, we define the following space equivalent to $\BK(\A_I)$:
$$\tilde{\BK}(\A_I) := \underset{J \in \mathcal{S}(\A_I)}{\operatorname{hocolim}} \, \tilde{\BG}_J(\A_I). $$
Naturality also shows that the self-maps $\Fr_p$ constructed in Claim \ref{main4} describe an endomorphism of the functor on $\mathcal{S}(\A_I)$ given by $J \mapsto \tilde{\BG}_J(\A_I)$. Therefore, we obtain a self-map of $\tilde{\BK}(\A_I)$ which we denote by $\psi^p$, and call the {\bf unstable Adams operation}.
\end{definition}

\medskip
\begin{remark}\label{rem4}
The above construction of unstable Adams operations corrects a mistake in the construction of these operations given in \cite{Ki4} (section 3), where we incorrectly used the $\Wt(\overline{\mathbb{F}}_p)$-points of $\G_J(\A_I)_{\mathbb Z}$ instead of the $\overline{\mathbb{F}}_p$-points when constructing a version of diagram \ref{zigzag3}. 
\end{remark}

\end{document}